\newtheorem{thm}{Theorem}[section]
\newtheorem{lemma}[thm]{Lemma}
\theoremstyle{definition}
\theoremstyle{question}
\theoremstyle{Conjecture}
\newtheorem{con}[thm]{Conjecture}
\numberwithin{equation}{section}
\begin{document}

\title[A counterexample to Herzog's Conjecture on the number of involutions]{A counterexample to Herzog's Conjecture on the number of involutions}%
\author{M. Zarrin}%

\address{Department of Mathematics, University of Kurdistan, P.O. Box: 416, Sanandaj, Iran}%
 \email{M.zarrin@uok.ac.ir}
\begin{abstract}
 In 1979, Herzog put forward the following conjecture: if two simple groups have the same
number of involutions, then they are of the same order. We give a counterexample to this conjecture.\\
{\bf Keywords}.
 Involution,  element order, simple group. \\
{\bf Mathematics Subject Classification (2010)}. 20D60, 20D06.
\end{abstract}
\maketitle

\section{\textbf{ Introduction}}

Let $G$ be a group, $\pi(G)$ be the set of primes $p$ such that $G$ contains an element of order $p$  and $I_k(G)$ be the number of elements of order $k$ in $G$ (in some other papers denoted by $s_k(G)$ or $m_k(G)$).

M. Herzog in 1979 \cite{Herzog}, showed that there are vast classes of simple groups in which characterized by
the number of its involutions.

\begin{thm}
Let $G$ be a finite simple group with $I$ involutions and suppose that
$I\equiv 1 ~(mod~ 4)$. Then one of the following holds:
\begin{itemize}
\item $I = 1$ and $G$ is cyclic of order $2$,
\item $I= 105$ and $G\cong A_7$,
\item $I= 165$ and $G\cong M_{11}$,
\item $I= q(q + \varepsilon)/2$, and $G\cong PSL(2, q)$, where $q = p^n > 3$ is a power of an
odd prime, $\varepsilon =1$ or $-1 $ and  $q\equiv \varepsilon ~ (mod~ 8)$,
\item $I= q^2(q^2 + q + 1)$ and $G =PSL(3, q)$, where $q = p^n$ is a power of an
odd prime and $q\equiv -1 ~(mod ~4)$,
\item $I = q^2(q^2-q + 1)$ and $G =± PSU(3, q)$, where $q = p^n$ is a power of an "
odd prime and $q\equiv 1 ~(mod ~ 4)$.
\end{itemize}
\end{thm}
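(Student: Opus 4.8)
The plan is to show that the hypothesis $I\equiv 1\pmod 4$ forces a Sylow $2$-subgroup $S$ of $G$ to belong to one of four isomorphism families — cyclic, dihedral, semidihedral, or generalized quaternion — and then to quote the classification of finite simple groups having such a Sylow $2$-subgroup. Note first that $I\equiv 1\pmod 4$ already rules out $|G|$ odd (where $I=0$), so $S\neq 1$.

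The reduction rests on two lemmas. The first is a purely $2$-local counting statement: for any finite group $G$ with Sylow $2$-subgroup $S$ one has $I_2(G)\equiv I_2(S)\pmod 4$. To prove it, let $S$ act by conjugation on the set $\Omega$ of involutions of $G$; orbit sizes are powers of $2$, so modulo $4$ only the orbits of length $1$ and $2$ contribute. An involution is $S$-fixed exactly when it is centralized by $S$, hence (since $S$ is then a Sylow $2$-subgroup of that centralizer) exactly when it lies in $Z(S)$ — and these are precisely the $S$-fixed points on $\Omega\cap S$. The remaining point is that the number of length-$2$ orbits of $S$ on $\Omega$ is congruent mod $2$ to the number of length-$2$ orbits of $S$ on $\Omega\cap S$, which one extracts from a fusion/transfer comparison of the involutions of $S$ with the involutions outside $S$. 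The second lemma is the $2$-group fact: a $2$-group $S$ of order $\ge 4$ satisfies $I_2(S)\equiv 1\pmod 4$ if and only if $S$ is cyclic, or of maximal class of order $\ge 8$ (dihedral, semidihedral, or generalized quaternion). The "if" direction is a direct count ($C_{2^n}$ and $Q_{2^n}$ have a single involution, $D_{2^n}$ has $2^{n-1}+1$, $SD_{2^n}$ has $2^{n-2}+1$), and the "only if" direction is Kulakoff's congruence: a $2$-group of order $\ge 8$ that is neither cyclic nor of maximal class has $\equiv 3\pmod 4$ subgroups of order $2$, while the order-$4$ case ($C_4$ versus $V_4$) is immediate. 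Together these force $S$ to be cyclic or dihedral/semidihedral/generalized quaternion of order $\ge 8$.

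Now run the four cases. If $S$ is cyclic, Burnside's normal complement theorem yields a normal $2$-complement, and simplicity forces $G\cong S$ cyclic of prime order, hence $|G|=2$ and $I=1$. If $S$ is generalized quaternion, the Brauer–Suzuki theorem says $G/O(G)$ has a center of order $2$, impossible for a nonabelian simple group, so this case is vacuous. If $S$ is dihedral of order $\ge 8$, the Gorenstein–Walter theorem gives $G\cong A_7$ (with $I=105$) or $G\cong PSL(2,q)$ for an odd prime power $q>3$; the single class of involutions of $PSL(2,q)$ has dihedral centralizer of order $q-\varepsilon$, so $I=|PSL(2,q)|/(q-\varepsilon)=q(q+\varepsilon)/2$, and a dihedral Sylow $2$-subgroup of order $\ge 8$ is equivalent to $q\equiv\pm 1\pmod 8$, which (since then $\varepsilon=1$ if $q\equiv 1\pmod 4$ and $\varepsilon=-1$ otherwise) is exactly $q\equiv\varepsilon\pmod 8$. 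If $S$ is semidihedral of order $\ge 16$, the Alperin–Brauer–Gorenstein theorem gives $G\cong M_{11}$ ($I=165$), $G\cong PSL(3,q)$ with $q\equiv -1\pmod 4$ ($I=q^2(q^2+q+1)$), or $G\cong PSU(3,q)$ with $q\equiv 1\pmod 4$ ($I=q^2(q^2-q+1)$); a one-line check confirms each of these $I$ is $\equiv 1\pmod 4$, so the stated list is precisely the set of possibilities.

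The main obstacle is the $2$-local lemma $I_2(G)\equiv I_2(S)\pmod 4$: the length-$2$ orbit count must be controlled modulo $2$ by an honest fusion argument rather than a naive count, and the modulus must come out exactly right — it is $4$ and not $8$, as already $I_2(A_7)=105$, $I_2(D_8)=5$, and $105-5=100$ is divisible by $4$ but not by $8$.
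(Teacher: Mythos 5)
The statement you are proving is quoted in the paper from Herzog's original article without proof, so your proposal has to stand on its own; its overall architecture (force the Sylow $2$-subgroup to be cyclic, dihedral, semidihedral or generalized quaternion, then invoke Burnside, Brauer--Suzuki, Gorenstein--Walter and Alperin--Brauer--Gorenstein and count involutions in each listed group) is indeed the standard route to this theorem. But the reduction step you rest everything on is false as stated. Your ``$2$-local lemma'' $I_2(G)\equiv I_2(S)\pmod 4$ for every finite group $G$ with Sylow $2$-subgroup $S$ fails already for $G=S_3$: there $I_2(G)=3$ while $S\cong C_2$ has $I_2(S)=1$. The same example kills the intermediate claim you flag as the ``remaining point'': $S=\langle(12)\rangle$ has exactly one orbit of length $2$ on the involutions of $S_3$ (namely $\{(13),(23)\}$) but no orbit of length $2$ on $\Omega\cap S$, so the two counts are not congruent mod $2$, and no fusion or transfer argument can repair a congruence that is simply not true. (More generally $D_{2n}$ with $n\equiv 3\pmod 4$, e.g.\ $C_2\times D_{10}$ versus its Klein four Sylow subgroup, shows the discrepancy is not confined to small cases.) Since you never actually supply the fusion argument and the statement it is supposed to prove is false, the proof has a genuine gap at its central point.

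What is true, and what the reduction must be replaced by, is the weaker one-directional statement: if the number of solutions of $x^2=1$ in $G$ is not divisible by $4$ (equivalently $I\equiv 1\pmod 4$), then a Sylow $2$-subgroup of $G$ is cyclic or of maximal class. This is a theorem of P.~Hall (his 1936 sharpening of Frobenius' theorem on the number of solutions of $x^n=1$, where the groups of maximal class appear precisely as the exceptional Sylow structures for $p=2$), and it cannot be obtained from the naive orbit count, because the orbits of length $2$ of $S$ on involutions outside $S$ genuinely contribute and must be controlled by a deeper argument. Your $2$-group lemma is essentially correct in substance (a $2$-group of order at least $8$ that is neither cyclic nor of maximal class has its number of involutions congruent to $3$ mod $4$), though attributing it to Kulakoff is inaccurate --- Kulakoff's congruence is for odd $p$, and the $p=2$ analogue needs its own proof --- and in the semidihedral branch you assert rather than derive the values $I=165$, $q^2(q^2+q+1)$, $q^2(q^2-q+1)$, which require knowing the involution classes and centralizer orders in $M_{11}$, $PSL(3,q)$ and $PSU(3,q)$. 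With Hall's theorem substituted for your false congruence, and those centralizer computations supplied, the rest of your case analysis (cyclic via Burnside, quaternion via Brauer--Suzuki, dihedral via Gorenstein--Walter with the $q\equiv\varepsilon\pmod 8$ bookkeeping, semidihedral via Alperin--Brauer--Gorenstein) is sound.
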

In fact, He showed that each of the above mentioned simple groups is characterized by
the number of its involutions. Then in view of the above results and as the groups $A_8$ and $PSL(3, 4)$ (it is well-known $G=PSL(3,4)$ has a single class of involutions, and its centralizer is the Sylow 2-subgroup, say $P$. It follows that $I_2(G)=|G|/|P|=315$) are of the same order and
each has $315$ involutions, he gave the following conjecture:
 \begin{con}
 If two simple groups have the same
number of involutions, then they are of the same order.
\end{con}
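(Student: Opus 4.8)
The plan is to \emph{disprove} the conjecture by producing two finite simple groups that have the same number of involutions but different orders. Herzog's theorem (Theorem 1.1) already pins down every finite simple group with $I \equiv 1 \pmod 4$, so any counterexample must satisfy $I \equiv 3 \pmod 4$ --- note $I$ is automatically odd, since by Frobenius' theorem the number of solutions of $x^2 = 1$ in a group of even order is even. I would therefore hunt for a coincidence between the involution-counting formulas of two different families of simple groups of Lie type, in the range not covered by Herzog.

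The coincidence I would aim to exploit is that $I = q^2(q^2 - q + 1)$ occurs both for the unitary groups $PSU(3,q)$ with $q$ an odd prime power and for the small Ree groups ${}^2G_2(q)$ with $q = 3^{2m+1}$. For $PSU(3,q)$ I would argue that there is a single class of involutions --- the images of the $\mathrm{diag}(1,-1,-1)$-type elements, the only order-$2$ elements of $SU(3,q)$ --- and compute its centralizer by first intersecting the $GU(1,q)\times GU(2,q)$ centralizer in $GU(3,q)$ with $SU(3,q)$ and then quotienting by the centre; dividing $|PSU(3,q)| = q^3(q+1)^2(q-1)(q^2-q+1)/\gcd(3,q+1)$ by the resulting centralizer order $q(q+1)^2(q-1)/\gcd(3,q+1)$ yields $I = q^2(q^2-q+1)$. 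For ${}^2G_2(q)$ I would quote the known involution centralizer $\langle t\rangle\times PSL(2,q)$, of order $q(q^2-1)$, so that $I = |{}^2G_2(q)|/\big(q(q^2-1)\big) = q^3(q^3+1)(q-1)/\big(q(q^2-1)\big) = q^2(q^2-q+1)$ as well.

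With both formulas established I would specialize to $q = 3^{2m+1}$, where $\gcd(3,q+1) = 1$ (since $3^{2m+1}+1\equiv 1\pmod 3$) and $PSU(3,q)$ and ${}^2G_2(q)$ are both simple. They then have exactly $q^2(q^2-q+1)$ involutions apiece, while $|PSU(3,q)| = (q+1)\,|{}^2G_2(q)|$, so their orders differ; already $q = 27$ gives a concrete counterexample. Consistently with Theorem 1.1, for such $q$ one checks $q^2(q^2-q+1)\equiv 3\pmod 4$, so the example indeed lies outside Herzog's range. The main obstacle is the bookkeeping in the middle step --- identifying the involution centralizers exactly and tracking the $\gcd(3,q+1)$ factors so that the two counts genuinely agree; the saving grace is that for $q = 3^{2m+1}$ those gcd corrections are trivial, which makes the equality of involution numbers exact and the counterexample unconditional.
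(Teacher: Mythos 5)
Your proposal is correct, but it disproves the conjecture along a genuinely different route from the paper. The paper's counterexample is a single small coincidence: using Wong's analysis of $PSp(4,q)$, it shows $PSp(4,3)$ has two classes of involutions with centralizers of orders $576$ and $96$, hence $I_2(PSp(4,3)) = 25920/576 + 25920/96 = 315 = I_2(PSL(3,4))$, while $|PSp(4,3)| = 25920 \neq 20160 = |PSL(3,4)|$. You instead exploit the coincidence of the closed formulas $I_2 = q^2(q^2-q+1)$ for $PSU(3,q)$ ($q$ odd) and for the Ree groups ${}^2G_2(q)$, $q = 3^{2m+1} \ge 27$; your centralizer computations check out (in $SU(3,q)$ the involutions form one class of type $\mathrm{diag}(1,-1,-1)$ with centralizer of order $q(q+1)^2(q-1)$, and by Ward's theorem ${}^2G_2(q)$ has one class with centralizer $\langle t\rangle \times PSL(2,q)$ of order $q(q^2-1)$), the $\gcd(3,q+1)$ factor in fact cancels in the quotient so the formula $q^2(q^2-q+1)$ holds for every odd $q$, and $|PSU(3,q)| = (q+1)\,|{}^2G_2(q)|$, so $q=27$ already gives a valid counterexample. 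Two small points worth making explicit: when passing from $SU(3,q)$ to $PSU(3,q)$ you should note that the centre has odd order, so every involution of $PSU(3,q)$ lifts to an involution of $SU(3,q)$ and $C_{PSU}(\bar t)$ is exactly the image of $C_{SU}(t)$ (the standard argument: if $t^g = tz$ with $z$ central then $z^2 = 1$, forcing $z=1$); and you rely on Ward's classification of involutions in ${}^2G_2(q)$, which plays the same role Wong's paper plays for the author. What each approach buys: the paper's example is minimal and entirely elementary to verify ($I_2 = 315$, orders $25920$ and $20160$), whereas yours produces an infinite family of counterexamples and makes visible that the failure of the conjecture comes from a coincidence of involution-counting formulas across two families of groups of Lie type, at the cost of a much larger smallest instance ($q = 27$, with $I_2 = 512487$) and heavier input from the structure theory of unitary and Ree groups.
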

Here we provide a counterexample to
this conjecture and give related questions. Let $p$ denote an odd prime integer and let $q =p^n$
where $n$ is a positive integer. Let $PSp(4, q)$ denote the projective symplectic group
in dimension $4$ over a field $F_q$ of $q$ elements. We show that the number of involution of the projective symplectic group of degree of degree $4$ over the finite field of size $3$, $PSp(4,3)$ is $315$.  In fact,  $$I_2(PSp(4,3))=315=I_2(PSL(3,4)),$$ but $|PSp(4,3)|=25920$ and $|PSL(3,4)|=20160$. \\

The group $G=PSp (4, q)$ is described in \cite{Wong}. This group is simple of order $\frac{1}{2}q^4(q^2+1)(q^2-1)^2$ and has a Sylow $2$-subgroup with
center of order $2$ so that involutions which lie in the centers of Sylow $2$-subgroups
form a single conjugacy class. In particular, by Lemmas 1.1, 2.4 and 3.7 of \cite{Wong}, the group $PSp (4, 3)$ has exactly two classes of involutions, say $t$ and $u$ and  $|C_{PSp (4, 3)}(t)|=3^2(3^2-1)^2=576$ and $|C_{PSp (4, 3)}(u)|=12\times 8=96$ (note that $t$ lie in the centers of Sylow $2$-subgroups).
Now by the following lemma, we can obtain that $I_2(PSp (4, 3))=\frac{25920}{576}+\frac{25920}{96}=315$.

\begin{lemma}
Let $H$ be a finite group with $k_2$ distinct conjugacy classes of involutions, and if $t_1, t_2, \ldots, t_{k_2}$ are involution elements of $H$, one form each of these $k_2$ classes, then $I_2(H)=\sum_{i=1}^{k_2} |H:C_H(t_i)|$, where $|H:C_H(t_i)|$ is the index of the centralizer $t_i$ in $H$.
\end{lemma}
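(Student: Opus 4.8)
The plan is to apply the orbit-stabilizer theorem to the conjugation action of $H$ on itself, restricted to the set of involutions. First I would observe that the set $\Omega=\set{x\in H : x^2=1,\ x\neq 1}$ of involutions of $H$ is invariant under conjugation, since an inner automorphism preserves the order of every element. Consequently $\Omega$ decomposes as a disjoint union of $H$-conjugacy classes, and by hypothesis exactly $k_2$ of them occur, with representatives $t_1,\dots,t_{k_2}$. Writing $t_i^{H}$ for the conjugacy class of $t_i$, we obtain the (pairwise disjoint) decomposition $\Omega=\bigcup_{i=1}^{k_2} t_i^{H}$, whence $I_2(H)=\abs{\Omega}=\sum_{i=1}^{k_2}\abs{t_i^{H}}$.

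Next I would identify $\abs{t_i^{H}}$ with the index $\abs{H:C_H(t_i)}$. Under the conjugation action $h\cdot x=hxh^{-1}$, the stabilizer of $t_i$ is, by definition, $\set{h\in H : ht_ih^{-1}=t_i}=C_H(t_i)$, while the orbit of $t_i$ is exactly $t_i^{H}$. Since $H$ is finite, the orbit-stabilizer theorem gives $\abs{t_i^{H}}=\abs{H:C_H(t_i)}$. Substituting into the displayed sum yields $I_2(H)=\sum_{i=1}^{k_2}\abs{H:C_H(t_i)}$, as claimed.

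There is no substantial obstacle here: the statement is a bookkeeping consequence of the class equation applied to the invariant subset of involutions. The only points deserving (minimal) attention are that distinct conjugacy classes are automatically disjoint, so the union above is genuinely disjoint, and that these $k_2$ classes exhaust $\Omega$ --- which is precisely the content of the hypothesis that $H$ has $k_2$ conjugacy classes of involutions. Finiteness of $H$ is used only to ensure that each index $\abs{H:C_H(t_i)}$ is finite and that the orbit-stabilizer count is valid.
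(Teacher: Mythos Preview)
Your argument is correct and is precisely the standard orbit--stabilizer/class--equation computation that the paper has in mind; the paper's own proof consists of the single word ``Clearly.'' There is nothing to add or compare: you have simply written out in full the routine verification the author deemed obvious.
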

\begin{proof}
Clearly.
\end{proof}
In view of his results and comparison with other results related to the set  $\{I_t(G)\mid t\in \pi(G)\}$ (see for instance, \cite{Zar}), one can find out the influence of $I_2(G)$ is more stronger than other $I_p(G)$ where $p$ is a prime number. The authors in \cite{Zar} conjectured that: if $G$ is a finite nonabelian simple group. Then $I_p(G)\neq I_q(G)$ for all
distinct prime divisors $p$ and $q$ of $|G|$. Finally, in view of Herzog's Conjecture and Conjecture 2.10 of \cite{Zar}, it might seem reasonable to make the following conjecture.

\begin{con}
If $S$ be a non-abelian simple and $G$ a group such
that $I_2(G)=I_2(S)$ and $I_p(G)=I_p(S)$ for some odd prime divisor $p$. Then $|G|=|S|$.
\end{con}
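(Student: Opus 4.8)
Concerning the conjecture above, we sketch a possible line of attack and the difficulties it must confront.

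\emph{A caveat, and a first reduction.} One should certainly restrict to finite $G$: otherwise $G=S\times\mathbb{Z}$ has $I_k(G)=I_k(S)$ for every $k$ while $|G|$ is infinite. Even for finite groups the hypothesis as literally stated is a shade too weak, for if $r$ is any prime not dividing $|S|$ then $G=S\times C_r$ satisfies $I_2(G)=I_2(S)$ and $I_p(G)=I_p(S)$ for \emph{every} odd prime divisor $p$ of $|S|$ --- for instance $S=A_5$, $r=7$ gives $I_2(G)=15$, $I_3(G)=20$, $I_5(G)=24$ --- yet $|G|=r|S|\ne|S|$. So the statement should be read with the additional requirement $\pi(G)=\pi(S)$ (equivalently, that $I_p(G)=I_p(S)$ hold for \emph{all} primes $p$), or with $G$ assumed simple as in Herzog's original formulation. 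Granting this, the plan is to proceed in two stages.

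\emph{Stage 1: the simple case.} Using the Classification of Finite Simple Groups, verify that among finite simple groups the triple $\bigl(p,\,I_2(S),\,I_p(S)\bigr)$ already determines $|S|$. This is exactly what rescues the conjecture from the example of the present note: although $I_2(PSp(4,3))=I_2(PSL(3,4))=315$, one checks that $I_3(PSp(4,3))\ne I_3(PSL(3,4))$ and $I_5(PSp(4,3))=5184\ne 8064=I_5(PSL(3,4))$, while $7\mid|PSL(3,4)|$ and $7\nmid|PSp(4,3)|$, so there is no odd prime $p$ at which the two groups agree. Carrying out the verification is a family-by-family computation: routine for the alternating and sporadic groups, and for the groups of Lie type a matter of combining the known formulas for the number of involutions (as in \cite{Wong} and its analogues) with the count of elements of order $p$, split according to whether $p$ is or is not the defining characteristic (unipotent versus semisimple classes), and comparing with the order formulas. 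Conjecture~2.10 of \cite{Zar} is a natural companion input here.

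\emph{Stage 2, and the main obstacle.} The real difficulty is to pass from simple groups to an arbitrary finite $G$. The natural machinery is the Brauer--Fowler principle that involution centralizers control the group: from $I_2(G)=I_2(S)$ one obtains an involution $t$ with $|G:C_G(t)|\le I_2(S)$, hence $|G|\le I_2(S)\,|C_G(t)|$, and one would try to bound $|C_G(t)|$ and the order of the solvable radical of $G$ using $I_p(G)=I_p(S)$ and $\pi(G)=\pi(S)$, thereby reducing to the almost simple case and invoking Stage~1. But the Brauer--Fowler estimates are far too crude to force $|G|$ to the \emph{exact} value $|S|$ from only two numerical invariants, so this step demands a genuine structural argument rather than a counting bound; that --- together with the fact that even Stage~1 rests on the full Classification and on a somewhat delicate handling of the Lie-type families, where coincidences between $I_2$ (and $I_p$) of different groups are not a priori excluded --- is why the statement remains, for now, only a conjecture.
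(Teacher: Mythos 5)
The statement you were asked to prove is not proved in the paper at all: it is offered there as a conjecture, motivated by Herzog's conjecture and Conjecture 2.10 of \cite{Zar}, so there is no proof of the paper's to measure yours against. Your text, by your own admission, is not a proof either: Stage 1 is a classification-wide verification that you describe but do not carry out (and for the Lie-type families it is a substantial computation, not a routine check), and Stage 2 explicitly lacks the structural argument that would be its core --- Brauer--Fowler-type bounds cannot pin $|G|$ to the exact value $|S|$, and you offer nothing to replace them. So the conjecture is left exactly where the paper leaves it.

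That said, your opening ``caveat'' is worth more than you give it credit for: it is a refutation of the conjecture as literally stated. Since every non-abelian simple group has even order, any prime $r\nmid|S|$ is odd, and $G=S\times C_r$ is a finite group with $I_2(G)=I_2(S)$ and $I_p(G)=I_p(S)$ for every odd prime divisor $p$ of $|S|$ (elements of order $2$ or $p$ must have trivial $C_r$-component), yet $|G|=r|S|\neq|S|$; your $A_5\times C_7$ instance, and the numerical comparison of $PSp(4,3)$ with $PSL(3,4)$ at the primes $3$, $5$, $7$, check out. So the statement can only be salvaged by adding a hypothesis such as $\pi(G)=\pi(S)$, or $I_p(G)=I_p(S)$ for all primes $p$, or $G$ simple --- a correction the paper does not make and that should be reported to the author. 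But a counterexample to the uncorrected statement together with a program for a corrected one is not the requested proof; the mathematical work of both stages remains to be done.
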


\end{document}